\def\l@subsection{\@tocline{2}{0pt}{2.5pc}{5pc}{}}
\def\l@subsubsection{\@tocline{2}{0pt}{5pc}{7.5pc}{}}
\numberwithin{equation}{section}
\newtheorem{thm}{Theorem}[section]
\newtheorem{cor}[thm]{Corollary}
\newtheorem{lem}[thm]{Lemma}
\newtheorem{prop}[thm]{Proposition}
\newtheorem{defn}[thm]{Definition}
\begin{document}
\allowdisplaybreaks{
\title[]{A CLUSTER EXPANSION PROOF THAT THE STOCHASTIC EXPONENTIAL OF A BROWNIAN MOTION IS A MARTINGALE}
\author{Steven D Miller}\email{stevendm@ed-alumni.net}
\address{Rytalix Capital}
\maketitle
\begin{abstract}
Let ${\psi}:\mathbb{R}^{+}\rightarrow\mathbb{R}^{+}$ be a smooth and continuous real function and $\psi\in\mathrm{L}^{2}(\mathbb{R}^{+})$. Let ${B}(t)$ be a standard Brownian motion defined with respect to a probability space $(\Omega,\mathscr{F},\bm{\mathsf{P}})$ and where $d{B}(t)={\xi}(t)dt$ and $t\in\mathbb{R}^{+}$. The process $\xi(t)$ is a Gaussian white noise with expectation $\bm{\mathsf{E}}~\xi(t)=0$ and with covariance $\bm{\mathsf{E}}~\xi(t)\xi(s)=\delta(t-s)$. The Dolean-Dades stochastic exponential ${Z}(t)$ is the solution to the linear stochastic differential equation describing a geometric Brownian motion such that $d{Z}(t)=\psi(t){Z}(t)d{B}(t)=\psi(t){Z}(t)\xi(t)dt$. Using a cluster expansion method, and the moment and cumulant generating functions for $\xi(t)$, it is shown that ${Z}(t)$ is a martingale. The original Novikov criteria for ${Z}(t)$ being a true martingale are reproduced and exactly satisfied, namely that
\begin{align}
\bm{\mathsf{E}}{Z}(t)=\bm{\mathsf{E}}\exp\left(\int_{o}^{t}{\psi}(u)d{B}(u)
-\frac{1}{2}\int_{0}^{t}|{\psi}(u)|^{2}du\right)=1\nonumber
\end{align}
provided that $\exp\big(\int_{0}^{t}|{\psi}(u)|^{2}du\big)<\infty$ for all $t>0$. However, $\bm{\mathsf{E}}\big[|{Z}(t)|^{p}\big] =\exp(\tfrac{1}{2}p(p-1)\phi(t))$, if $\phi(t)=\int_{0}^{t}|\psi(u)|^{2}du$ is monotone increasing and is a submartingale for all $p>1$.
\end{abstract}
\raggedbottom
\maketitle
\section{\textbf{Introduction}}
Let ${B}(t)$ be a standard Brownian motion with respect to a probability space $(\Omega,\mathscr{F},\bm{\mathsf{P}})$, and let
${\psi}:\mathbb{R}^{+}\rightarrow\mathbb{R}^{+}$ be smooth continuous real function \textbf{[1]}. A classical problem in stochastic analysis is to prove that the Dolean-Dades stochastic exponential
\begin{align}
&{Z}(t)=\exp\left(\int_{o}^{t}{\psi}(u)d{B}(u)-\frac{1}{2}\int_{0}^{t}|{\psi}(u)|^{2}du\right)\\&
{Z}(0)=0
\end{align}
is a true martingale \textbf{[2]}. The DDSE is the exact solution of the geometric Brownian motion \textbf{[1]}
\begin{align}
{d{Z}(t)}={\psi}(t){Z}(t)~\xi(t)dt\equiv{\psi}(t){Z}(t)~d{B}(t)
\end{align}
where $d{B}(t)=\xi(t)dt$ and $\xi(t)$ is a Gaussian white noise. The solution has the Ito stochastic integral representation
\begin{align}
{Z}(t)=\int_{0}^{t}\psi(u){Z}(u)d{B}(u)
\end{align}
The explicit solution (1.1) is obtained \textbf{[1]} via the Ito expansion of $\log {Z}(t)$. Proving that ${Z}(t)$ is a true martingale is actually a somewhat
difficult and subtle problem and is relevant to other theorems and results \textbf{[2-7]}. Ito integrals of the form (1.4) are not always martingales. The well-known necessary and sufficient conditions for ${Z}(t)$ to be a true martingale are due to Novikov \textbf{[2]} and also Kazamaki \textbf{[5]}. The Novikov criteria are
\begin{align}
\bm{\mathsf{E}}{Z}(t)=\bm{\mathsf{E}}
\exp\left(\int_{o}^{t}{\psi}(u)d{B}(u)-\frac{1}{2}\int_{0}^{t}|{\psi}(u)|^{2}du\right)=1
\end{align}
and the bound
\begin{align}
\exp\left(\frac{1}{2}\int_{0}^{t}|{\psi}(u)|^{2}du\right)<\infty,~~\forall t>0
\end{align}
or equivalently $\frac{1}{2}\|\psi\|_{L_{2}(\mathbb{R})}^{2}=\frac{1}{2}\int_{0}^{t}|{\psi}(u)|^{2}du<\infty$. The function ${\psi}$ can also be a random process ${\psi}(t,\omega)$ with respect to $\omega\in\Omega$ with Novikov criteria \textbf{[6]}
\begin{align}
\bm{\mathsf{E}}{Z}(t)=\bm{\mathsf{E}}
\exp\left(\int_{o}^{t}{\psi}(u,\omega)d{B}(u,\omega)-\frac{1}{2}\int_{0}^{t}|{\psi}(u,\omega)|^{2}du\right)=1
\end{align}
and
\begin{align}
\bm{\mathsf{E}}\left \lbrace\exp\left(\int_{0}^{t}|{\psi}(u,\omega)|^{2}du\right)\right\rbrace<\infty,~~\forall t>0
\end{align}
In this note we only consider (1.5), and a short new proof is given to establish that ${Z}(t)$ is a martingale. The exact criteria (1.5) and (1.6) are established, via a cluster-expansion method and utilising moment and cumulant-generating functions. 

Geometric Brownian motion arises naturally in problems of stochastic exponential growth, which occurs when a quantity of interest grows exponentially but with a random multiplier and/or random waiting time between spurts of growth. Such processes are ubiquitous in both Nature and in human-created systems: bacterial growth in a sustaining medium; in nuclear and cellular fission; tissue growth in embryonic and cancer biology; in viral epidemics; in financial markets and bubbles, and the Black-Scholes option pricing model; internet growth and propagation of viral social media posts; population and extinction dynamics; Moore's law for computer processing power; and inflationary expansion of the very early Universe with random fluctuations \textbf{[8-17]}. Noise can either boost stochastic exponential growth or drive a population to extinction \textbf{[8,11,12]}.

Most such models on random growth have focussed on the SDE for geometric Brownian motion since it can be solved exactly with a strong solution. Suppose a system evolves exponentially via the simple linear ODE $dX(t)=\alpha X(t)dt $ then $X(t)=X(0)\exp(\alpha t)$. If $\alpha>0$ then the system undergoes exponential growth and $X(t)\rightarrow\infty$ as $t\rightarrow \infty$ and if $\alpha<0$ then the system exponentially decays or collapses so that $X(t)\rightarrow 0$ as $t\rightarrow \infty$. The system is stable or static for $\alpha=0$. If the system is randomly perturbed by white noise (additively) then
\begin{align}
d{X}(t)=\alpha{X}(t)dt+\psi(t){X}(t)\xi(t)dt\equiv \alpha{X}(t)dt+\psi(t){X}(t)d{B}(t)
\end{align}
A strong solution exists for this SDE \textbf{[1]}. Given any $C^{2}$-functional $f({X}(t))$, the Ito Lemma is
\begin{align}
&df({X}(t))=\nabla_{X} f({X}(t))dX(t)+\frac{1}{2}\nabla^{2}_{X}f({X}(t))d[{X},{X}](t)\nonumber\\&
=\nabla_{X} f({X}(t))d{X}(t)+\frac{1}{2}\nabla^{2}_{X}f({X}(t))|\psi(t)|^{2}dt\nonumber\\&
=\nabla_{X} f({X}(t))\big\lbrace\alpha{X}(t)dt+\psi(t){X}(t)d{B}(t)\big\rbrace+\frac{1}{2}\nabla^{2}_{X}f({X}(t))|\psi(t)|^{2}dt
\end{align}
where $\nabla_{X}=d/d{X}(t)$ and $\nabla^{2}_{X}=d^{2}/d{X}(t)^{2}$. Using $f({X}(t))=\log {X}(t)$
\begin{align}
&d{U}(t)\equiv d\log {X}(t)=\frac{1}{{X}(t)}d{X}(t)+\frac{1}{2}\left(-\frac{1}{|{X}(t)|^{2}}\right)|\psi(t)|^{2}|{X}(t)|^{2}dt\nonumber\\&
=\frac{1}{{X}(t)}\big\lbrace\alpha {X}(t)dt+\psi(t){X}(t)d{B}(t)\big\rbrace-\frac{1}{2}\psi(t)|^{2}dt\nonumber\\&
=\left(\alpha-\frac{1}{2}|\psi(t)|^{2}\right)dt+\psi(t)d{B}(t)
\end{align}
Then the solution is
\begin{align}
X(t)=X(0)\exp(\alpha t)\exp\left(\int_{0}^{t}\psi(u)d{B}(u)-\frac{1}{2}\int_{0}^{t}|\psi(u)|^{2}du\right)=\exp(\alpha t){Z}(t)
\end{align}
The expectation is
\begin{align}
\bm{\mathsf{E}}{X}(t)=X(0)\exp(\alpha t)\bm{\mathsf{E}}\exp\left(\int_{0}^{t}\psi(u)d{B}(u)-\frac{1}{2}\int_{0}^{t}|\psi(u)|^{2}du\right)=
\exp(\alpha t)\bm{\mathsf{E}}{Z}(t)
\end{align}
which is $\bm{\mathsf{E}}{X}(t)=\exp(\alpha t)$ if $\bm{\mathsf{E}}Z(t)=1$. The martingale property of ${Z}(t)$ is therefore desirable and ensures there is no blow up at any finite time $t>0$.
\section{Main theorem and proof utilising a cluster expansion method}
\subsection{Preliminary definitions and lemmas}
Prior to the proof, we first establish the following preliminary definitions and lemmas. The proof utilises a cluster expansion method \textbf{[18-21]}.
\begin{defn}
White noise is (informally) defined as a zero-centred Gaussian process $\xi$ with covariance $\bm{\mathsf{E}}~\xi(t)\xi(s)=\delta(t-s)$ and 
expectation $\bm{\mathsf{E}}\xi(t)=0$. A scalar or inner product $\langle \bullet,\bullet \rangle$ in $L^{2}(\mathbb{R})$ can be defined as
\begin{align}
\langle\psi,\phi\rangle=\bm{\mathsf{E}}\big\langle \psi,\xi\big\rangle\big\langle \phi,\xi\big\rangle=\int\!\!\!\!\int\psi(s)\phi(t)\bm{\mathsf{E}}~\xi(s)\xi(t)dsdt=\int\!\!\!\!\int\psi(s)\phi(t)\delta(t-s)dsdt
\end{align}
Then
\begin{align}
&\langle\psi,\psi\rangle=\bm{\mathsf{E}}\big\langle \psi,\xi\big\rangle\big\langle \psi,\xi\big\rangle=\int\!\!\!\!\int\psi(s)\psi(t)\bm{\mathsf{E}}~\xi(s)\xi(t)dsdt=\int\!\!\!\!\int\psi(s)\psi(t)\delta(t-s)dsdt
\end{align}
which is
\begin{align}
\langle\psi,\psi\rangle=\int|\psi(s)|^{2}ds=\|\psi\|_{L^{2}(\mathbb{R})}^{2}
\end{align}
The noise $\xi$ can be formulated as a Gaussian random process on any space of distributions containing $\mathrm{L}^{2}(\mathbb{R})$. Stochastic 
integrals of the form $\int\psi(s)\xi(s)ds$ are well defined if $\psi\in\mathrm{L}^{2}(\mathbb{R})$. If $\psi$ is an indicator function on $[0,t]$ then
the process $B(t)=\int_{0}^{t}\xi(s)ds$ is a Brownian motion and $dB(t)=\xi(t)dt$.
\end{defn}
\begin{defn}
Given a time-ordered set $(t_{1},...t_{m})\in(0,T)$, with $t_{1}<t_{2}<t_{3}<...<t_{m-1}<t_{m}$, the mth-order moments and cumulants for the white noise $\xi(t)$ are given by
\begin{align}
&\overrightarrow{\mathbb{T}}\bm{\mathsf{E}}~\xi(t_{1})\otimes...\otimes\xi(t_{m})
=\overrightarrow{\mathbb{T}}\bm{\mathsf{E}}\prod_{q=1}^{m}\xi(t_{q})\nonumber\\&
\overrightarrow{\mathbb{T}}\bm{\mathsf{C}}~\xi(t_{1})\otimes...\otimes\xi(t_{m})
=\overrightarrow{\mathbb{T}}\bm{\mathsf{C}}\prod_{q=1}^{m}\xi(t_{q})
\end{align}
where $\mathbb{T}$ is a 'time-ordering operator'. For example if $t_{1}<t_{2}<t_{3}$ and $f(t)$ is a function of t then $\overrightarrow{\mathbb{T}}f(t_{2})f(t_{1})f(t_{3}=f(t_{1})f(t_{2})f(t_{3})$. Since $\xi(t)$ is a Gaussian, it is defined entirely by its first two moments and all cumulants or order $m\ge 3$ vanish so that
\begin{align}
\left\lbrace\overrightarrow{\mathbb{T}}\bm{\mathsf{C}}\prod_{q=1}^{m}\xi(t_{q})\right\rbrace_{m\ge 3}=0
\end{align}
\end{defn}
For example, at second order for a Gaussian process the binary cumulant is equivalent to the binary moment
\begin{align}
&\bm{\mathsf{C}}\xi(t_{1})\xi(t_{2})
=\bm{\mathsf{E}}~\xi(t_{1})\xi(t_{2})-\bm{\mathsf{E}}~\xi(t_{1})\bm{\mathsf{E}}~\xi(t_{2})=\bm{\mathsf{E}}\xi(t_{1})\xi(t_{2})=\delta(t_{2}-t_{1})
\end{align}
\begin{defn}
The moment generating function (MGF) $\mathscr{M}[\xi(t)]$ and the cumulant-generating function [CGF]
$\mathscr{C}[\xi(t)]$ of the white noise $\xi(t)$ are given by
\begin{align}
&{\mathscr{M}}[\xi(t)]=\sum_{m=0}^{\infty}\frac{\beta^{m}}{m!}\int_{0}^{t}...\int_{0}^{t_{m-1}}dt_{1}...dt_{m}\overrightarrow{\mathbb{T}}
\bm{\mathsf{E}}\prod_{q=1}^{m}~\xi(t_{q})\\&
{\mathscr{C}}[\xi(t)]=\sum_{m=1}^{\infty}\frac{\beta^{m}}{m!}\int_{0}^{t}...\int_{0}^{t_{m-1}}dt_{1}...dt_{m}
\overrightarrow{\mathbb{T}}\bm{\mathsf{C}}\prod_{q=1}^{m}\xi(t_{q})
\end{align}
where ${\beta}$ is an arbitrary real constant. It is important to note that the summation in (2.8) begins from $m=1$ and not $m=0$ and that
\begin{align}
&\overrightarrow{\mathbb{T}}\bm{\mathsf{E}}\prod_{q=1}^{m}\xi(t_{q})\ne\overrightarrow{\mathbb{T}}\prod_{q=1}^{m}\bm{\mathsf{E}}\xi(t_{q})\\&
\overrightarrow{\mathbb{T}}\bm{\mathsf{C}}\prod_{q=1}^{m}\xi(t_{q})\ne \overrightarrow{\mathbb{T}}\prod_{q=1}^{m}\bm{\mathsf{C}}\xi(t_{q})
\end{align}
Equations (2.7) and (2.8) can also be written as
\begin{align}
&{\mathscr{M}}[\xi(t)]=\sum_{m=0}^{\infty}\frac{\beta^{m}}{m!}
\int{\mathbf{D}}_{m}[t_{1}...t_{m}]\overrightarrow{\mathbb{T}}\bm{\mathsf{E}}\prod_{q=1}^{m}\xi(t_{q})\\&
{\mathscr{C}}[\xi(t)]=\sum_{m=1}^{\infty}\frac{\beta^{m}}{m!}
\int{\mathbf{D}}_{m}[t_{1}...t_{m}]\overrightarrow{\mathbb{T}}\bm{\mathsf{C}}\prod_{q=1}^{m}\xi(t_{q})
\end{align}
where $\int\bm{\mathrm{D}}_{m}[t]=\int...\int dt_{1}...dt_{m}$ is a 'path integral'. Choosing ${\beta}=+1$
\begin{align}
&{\mathscr{M}}[\xi(t)]=\sum_{m=0}^{\infty}\frac{1}{m!}\int\mathbf{D}_{m}[t_{1}...t_{m}]\overrightarrow{\mathbb{T}}\bm{\mathsf{E}}
\prod_{q=1}^{m}\xi(t_{q})\\&
{\mathscr{C}}[\xi(t)]=\sum_{m=1}^{\infty}\frac{1}{m!}\int\mathbf{D}_{m}[t_{1}...t_{m}]\overrightarrow{\mathbb{T}}\bm{\mathsf{C}}
\prod_{q=1}^{m}\xi(t_{q})
\end{align}
\end{defn}
\begin{lem}
The relation between the MGF and the CGF is
\begin{align}
{\log}{\mathscr{M}}[\xi(t)]={\mathscr{C}}[\xi(t)]
\end{align}
so that
\begin{align}
{\mathscr{M}}[\xi(t)]=\exp\big({\mathscr{C}}[\xi(t)]\big)
\end{align}
Hence
\begin{align}
&\sum_{m=0}^{\infty}\frac{1}{m!}\int\mathbf{D}_{m}[t_{1}...t_{m}]\overrightarrow{\mathbb{T}}
\bm{\mathsf{E}}\prod_{q=1}^{m}\xi(t_{q})\\&
\exp\left(\sum_{m=1}^{\infty}\frac{1}{m!}\int\mathbf{D}_{m}[t_{1}...t_{m}]\overrightarrow{\mathbb{T}}
\bm{\mathsf{C}}\prod_{q=1}^{m}\xi(t_{q})\right)
\end{align}
\end{lem}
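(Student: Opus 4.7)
The plan is to establish $\log\mathscr{M}[\xi(t)]=\mathscr{C}[\xi(t)]$ via the exponential formula that links moments and cumulants combinatorially (this is precisely the algebraic backbone of any cluster expansion). The starting point is the set-partition moment-cumulant identity
\begin{align}
\overrightarrow{\mathbb{T}}\bm{\mathsf{E}}\prod_{q=1}^{m}\xi(t_{q})=\sum_{\pi\in\mathcal{P}(m)}\prod_{B\in\pi}\overrightarrow{\mathbb{T}}\bm{\mathsf{C}}\prod_{q\in B}\xi(t_{q}),\nonumber
\end{align}
where $\mathcal{P}(m)$ denotes the collection of all set-partitions of $\{1,\ldots,m\}$ into non-empty blocks $B$. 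This identity holds for any joint law; for Gaussian white noise it would collapse onto pair-partitions because all cumulants of order $\geq 3$ vanish by (2.6), but for the lemma we retain it in full generality.

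First, I would expand $\exp(\mathscr{C}[\xi(t)])$ via the Taylor series of the exponential and substitute the definition of $\mathscr{C}$:
\begin{align}
\exp(\mathscr{C}[\xi(t)])=\sum_{n=0}^{\infty}\frac{1}{n!}\left(\sum_{k=1}^{\infty}\frac{1}{k!}\int\mathbf{D}_{k}[t_{1}\ldots t_{k}]\overrightarrow{\mathbb{T}}\bm{\mathsf{C}}\prod_{q=1}^{k}\xi(t_{q})\right)^{n}.\nonumber
\end{align}
Second, I would expand the $n$-th power as an $n$-fold sum over block sizes $k_{1},\ldots,k_{n}\geq 1$, producing integrated products of cumulants of those orders over disjoint groups of time variables. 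Third, I would collect all contributions with fixed total order $m=k_{1}+\cdots+k_{n}$: the multinomial weight $m!/(k_{1}!\cdots k_{n}!)$ counts the ways to split $\{1,\ldots,m\}$ into an ordered sequence of blocks of the prescribed sizes, and dividing by $n!$ to forget the order of the blocks produces exactly the sum over unordered set-partitions $\pi\in\mathcal{P}(m)$. Fourth, applying the moment-cumulant identity in reverse recognises the bracketed sum as $\overrightarrow{\mathbb{T}}\bm{\mathsf{E}}\prod_{q=1}^{m}\xi(t_{q})$, so the double series reassembles term-by-term into $\mathscr{M}[\xi(t)]$ as defined in (2.13).

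The principal obstacle is the combinatorial bookkeeping in the third step: one must justify the passage from ordered tuples of block sizes to unordered set-partitions and check that the product of factorials $(1/n!)(1/k_{1}!)\cdots(1/k_{n}!)$, multiplied by the multinomial coefficient, reproduces $1/m!$ times the partition count in each combinatorial class. A secondary subtlety is that $\xi$ is a distribution rather than an honest random variable, so the multiple integrals over products of cumulants must be read as contractions against products of $\delta$-functions; this is best handled by viewing every coefficient as a multilinear functional on test functions in $L^{2}(\mathbb{R})$ via the pairing in Definition 2.1 and then invoking density. Once these two points are settled, the identity follows by comparing the resulting series coefficient by coefficient.
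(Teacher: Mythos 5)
The paper offers no proof of this lemma at all: the identity $\log\mathscr{M}[\xi(t)]=\mathscr{C}[\xi(t)]$ is simply asserted as the standard moment--cumulant relation (implicitly deferring to the cluster-expansion references [18--21]), and the two displayed series are just the definitions (2.13)--(2.14) substituted into $\mathscr{M}=\exp(\mathscr{C})$. Your proposal therefore supplies genuine content that the paper omits, and the route you choose --- the set-partition moment--cumulant identity $\overrightarrow{\mathbb{T}}\bm{\mathsf{E}}\prod_{q}\xi(t_{q})=\sum_{\pi}\prod_{B\in\pi}\overrightarrow{\mathbb{T}}\bm{\mathsf{C}}\prod_{q\in B}\xi(t_{q})$, followed by expansion of $\exp(\mathscr{C})$, factorization of the integrals over blocks, and the multinomial bookkeeping converting ordered tuples of block sizes into unordered partitions --- is the correct and standard derivation of the exponential formula. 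Your count is right: $\tfrac{1}{n!}\binom{m}{k_{1},\ldots,k_{n}}$ summed over ordered tuples with a fixed size-multiset reproduces the number of set partitions in that class, so the coefficient of order $m$ in $\exp(\mathscr{C})$ is exactly the partition sum, i.e.\ the $m$th moment. Two caveats are worth recording if you write this out in full. First, the rearrangement of the double series into a single series ordered by total degree $m$ needs absolute convergence; in the Gaussian application of the paper this is harmless because all cumulants of order $\ge 3$ vanish and $\mathscr{C}$ reduces to a single quadratic term, but for the lemma stated in general you should either impose a summability hypothesis or work formally in the grading parameter $\beta$. Second, the paper's definitions mix a prefactor $1/m!$ with integration over the time-ordered simplex $0<t_{m}<\cdots<t_{1}<t$, whereas the factorization over blocks in your step three implicitly integrates each block over the full cube; you should fix one convention (the $\overrightarrow{\mathbb{T}}$ symmetrization makes the two differ by exactly the $1/m!$ you are tracking) before comparing coefficients, since otherwise the bookkeeping in your "principal obstacle" paragraph will come out off by symmetry factors. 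Neither point undermines the argument; both are resolvable, and your identification of them as the delicate steps is accurate.
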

\begin{prop}
Given $\psi\in\mathrm{L}^{2}(\mathbb{R})$ and the white noise $\xi(t)$, then for all $t>0$ define the Gaussian noise or random function
\begin{align}
\Xi(t)=\psi(t)\xi(t)
\end{align}
Then $\bm{\mathsf{E}}\Xi(t)=0$ and
\begin{align}
\bm{\mathsf{E}}~\Xi(t)\Xi(s)=\psi(t)\psi(s)\bm{\mathsf{E}}~\xi(t)\xi(s)=\psi(t)\psi(s)\delta(t-s)
\end{align}
\end{prop}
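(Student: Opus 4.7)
The plan is to exploit the fact that $\psi$ is a deterministic function, not a random process, so it commutes with the expectation operator pointwise in time. The result will then follow essentially from linearity of expectation together with the two defining properties of white noise from Definition 2.1, namely $\bm{\mathsf{E}}\,\xi(t)=0$ and $\bm{\mathsf{E}}\,\xi(t)\xi(s)=\delta(t-s)$. Nothing from the cluster expansion of Lemma 2.4 is needed at this stage; the proposition is really a lemma preparing the notation $\Xi(t)=\psi(t)\xi(t)$ for the main theorem.

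First I would verify the mean. Since $\psi(t)$ is nonrandom (a fixed element of $L^{2}(\mathbb{R}^{+})$), linearity gives
\begin{align}
\bm{\mathsf{E}}\,\Xi(t)=\bm{\mathsf{E}}\bigl[\psi(t)\xi(t)\bigr]=\psi(t)\,\bm{\mathsf{E}}\,\xi(t)=0.\nonumber
\end{align}
Then I would compute the two-point function in the same way, pulling both deterministic factors outside the expectation:
\begin{align}
\bm{\mathsf{E}}\,\Xi(t)\Xi(s)=\bm{\mathsf{E}}\bigl[\psi(t)\xi(t)\psi(s)\xi(s)\bigr]=\psi(t)\psi(s)\,\bm{\mathsf{E}}\,\xi(t)\xi(s)=\psi(t)\psi(s)\delta(t-s),\nonumber
\end{align}
which is precisely the claimed identity.

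For completeness I would also observe that $\Xi(t)$ inherits Gaussianity from $\xi(t)$: for any test function $\phi\in L^{2}(\mathbb{R}^{+})$, the stochastic integral $\langle \phi,\Xi\rangle=\int\phi(u)\psi(u)\xi(u)du=\langle \phi\psi,\xi\rangle$ is well defined whenever $\phi\psi\in L^{2}(\mathbb{R}^{+})$ (which holds in particular for bounded $\phi$, since $\psi\in L^{2}$), and is a linear functional of the Gaussian noise $\xi$, hence is itself Gaussian. Thus $\Xi$ is a centred Gaussian process whose entire law is determined by the covariance kernel $\psi(t)\psi(s)\delta(t-s)$ just computed.

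I do not anticipate any genuine obstacle here: the only delicate point is notational, namely that $\xi(t)$ is a distribution-valued object and $\bm{\mathsf{E}}\,\xi(t)\xi(s)=\delta(t-s)$ is to be read in the pairing sense of equations (2.1)--(2.3) of Definition 2.1. Once that convention is adopted, the pointwise manipulation above is rigorous, and the only hypothesis used on $\psi$ is that it is a fixed, measurable, deterministic function.
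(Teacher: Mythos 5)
Your proof is correct and matches the paper's treatment: the paper states this proposition without any explicit proof, treating it as immediate from linearity of expectation and the defining properties $\bm{\mathsf{E}}\,\xi(t)=0$, $\bm{\mathsf{E}}\,\xi(t)\xi(s)=\delta(t-s)$, which is exactly the computation you give. Your additional remark that $\Xi$ inherits Gaussianity via the pairing $\langle\phi,\Xi\rangle=\langle\phi\psi,\xi\rangle$ is a welcome justification of the word ``Gaussian'' in the statement that the paper itself leaves implicit.
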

\begin{lem}
Given $\Xi(t)=\psi(t)\xi(t)$ then for any $t_{2}>t_{1}$ and $(t_{1},t_{2})\in (0,t)$
\begin{align}
\langle\psi,\psi\rangle\equiv\int_{0}^{t}\int_{0}^{t_{1}}\bm{\mathsf{E}}~\Xi(t_{1})\Xi(t_{2})dt_{1}dt_{2}
=\int_{0}^{t}|\psi(t_{1})|^{2}dt_{1}=\|\psi\|_{L_{2}(\mathbb{R})}
\end{align}
\end{lem}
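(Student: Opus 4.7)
The strategy is a direct computation combining Proposition~2.5 with the sifting property of the Dirac delta. First I would substitute $\Xi(t)=\psi(t)\xi(t)$ and, since $\psi$ is deterministic, pull the two $\psi$-factors outside the expectation, so that Proposition~2.5 gives
\begin{align*}
\int_0^t\!\!\int_0^{t_1}\bm{\mathsf{E}}\,\Xi(t_1)\Xi(t_2)\,dt_2\,dt_1
=\int_0^t\!\!\int_0^{t_1}\psi(t_1)\psi(t_2)\,\delta(t_1-t_2)\,dt_2\,dt_1.
\end{align*}

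Next I would perform the inner integration in $t_2$ using the sifting property, $\int_0^{t_1}\psi(t_2)\,\delta(t_1-t_2)\,dt_2=\psi(t_1)$, reducing the problem to the one-dimensional integral $\int_0^t|\psi(t_1)|^2\,dt_1$. By (2.3) this is exactly $\|\psi\|_{L^2(\mathbb{R})}^2$, and by the defining relation (2.2) it is also $\langle\psi,\psi\rangle$, which is the content of the lemma.

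The only real obstacle is the boundary behaviour of $\delta(t_1-t_2)$ at the diagonal edge $\{t_1=t_2\}$: this support sits exactly on the boundary of the $t_2$-interval $[0,t_1]$, so a convention must be fixed in order for the sifting step to return $\psi(t_1)$ rather than $\tfrac12\psi(t_1)$. I would resolve this by adopting the same convention implicit in the inner-product formula (2.2), namely that the full delta mass is assigned to the causal (time-ordered) sector, so that the simplex integral and the unordered square integral agree. With this convention fixed once and for all, the calculation is a one-line verification and the lemma is immediate; the same convention is exactly what will be needed in the cluster expansion that follows, since the $1/m!$ prefactors in Definition~2.3 are written so as to match the time-ordered Wick contractions of $\xi$ rather than the symmetric ones.
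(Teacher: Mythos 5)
Your proposal follows essentially the same route as the paper's own proof: substitute $\Xi=\psi\xi$, invoke the covariance $\psi(t_{1})\psi(t_{2})\delta(t_{2}-t_{1})$, and collapse the inner integral by the sifting property to obtain $\int_{0}^{t}|\psi(t_{1})|^{2}dt_{1}$. Your explicit remark about the delta mass sitting on the boundary of the simplex $\{t_{2}\le t_{1}\}$ is a point the paper passes over silently (it simply asserts the inner integral equals $\psi(t_{1})$ rather than $\tfrac{1}{2}\psi(t_{1})$), so flagging and fixing that convention is a welcome addition rather than a deviation.
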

\begin{proof}
Using the sifting property of the delta function
\begin{align}
&\int_{0}^{t}\int_{0}^{t_{1}}\bm{\mathsf{E}}~\Xi(t_{1})\Xi(t_{2}) dt_{1}dt_{2}\nonumber\\&
=\int_{0}^{t}\int_{0}^{t_{1}}\psi(t_{1})\psi(t_{2})\bm{\mathsf{E}}~\Xi(t_{1})\Xi(t_{2})dt_{1}dt_{2}\nonumber\\&
=\int_{0}^{t}\psi(t_{1})\bigg|\int_{0}^{t_{1}}\psi(t_{2})\delta(t_{2}-t_{1}) dt_{2}\bigg|dt_{1}\nonumber\\&=\int_{0}^{t}\psi(t_{1})\psi(t_{1})dt_{1}
=\int_{0}^{t}|\psi(t_{1})|^{2}dt_{1}\equiv\|\psi\|_{L_{2}(\mathbb{R^{+}})}
\end{align}
\end{proof}
\begin{prop}
Given $\Xi(t)=\beta(t)\xi(t)$ then the Dolean-Dades stochastic exponential ${Z}(t)$ can be expressed as
\begin{align}
&{Z}(t)=\exp\left(\int_{0}^{t}\psi(u)d{B}(u)-\frac{1}{2}\int_{0}^{t}|\psi(u)|^{2}du\right)\nonumber\\&
=\exp\left(\int_{0}^{t}\psi(u)\xi(u)du-\frac{1}{2}\int_{0}^{t}|\psi(u)|^{2}du\right)\nonumber\\&
=\exp\left(\int_{0}^{t}\Xi(u)du-\frac{1}{2}\int_{0}^{t}|\psi(u)|^{2}du\right)
\end{align}
\end{prop}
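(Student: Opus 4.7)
The plan is to observe that Proposition 2.6 is essentially a substitution identity: it rewrites the stochastic integral in the exponent of $Z(t)$ first in terms of the white noise $\xi$ and then in terms of the auxiliary process $\Xi$. No analytic content beyond the definitions already in play is required, so the proof is a three-line chain of equalities.

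First I would take the standard form of the Dolean-Dades exponential as in (1.1), namely
\begin{align}
Z(t)=\exp\!\left(\int_{0}^{t}\psi(u)\,dB(u)-\tfrac{1}{2}\int_{0}^{t}|\psi(u)|^{2}du\right).\nonumber
\end{align}
Then, using the informal identification $dB(u)=\xi(u)\,du$ recorded at the end of Definition 2.1 (which is justified there by the fact that $\psi\in L^{2}(\mathbb{R}^{+})$, so that $\int\psi(u)\,\xi(u)\,du$ is a well-defined stochastic integral), I would replace $dB(u)$ by $\xi(u)\,du$ to obtain the intermediate expression
\begin{align}
Z(t)=\exp\!\left(\int_{0}^{t}\psi(u)\xi(u)\,du-\tfrac{1}{2}\int_{0}^{t}|\psi(u)|^{2}du\right).\nonumber
\end{align}

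Next I would invoke the definition $\Xi(u)=\psi(u)\xi(u)$ from Proposition 2.4 and substitute into the first integral inside the exponential. This gives the third displayed form, completing the chain of equalities stated in the proposition. The quadratic variation term $\tfrac{1}{2}\int_{0}^{t}|\psi(u)|^{2}\,du$ is left untouched throughout, since by Lemma 2.5 it already agrees with $\tfrac{1}{2}\langle\psi,\psi\rangle$ and needs no rewriting.

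There is essentially no obstacle here: the content is entirely notational, and the only thing to be careful about is that the symbolic manipulation $\int\psi\,dB=\int\psi\xi\,du$ is being used formally (as is standard in the white-noise calculus framework set up in Definition 2.1) rather than as an equality of random variables in the classical Itô sense. I would simply remark on this and cite Definition 2.1 for the meaning of $\int\psi(u)\xi(u)\,du$, after which the three displayed lines follow by substitution.
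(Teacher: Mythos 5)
Your proposal is correct and matches the paper's (implicit) treatment: the paper states this proposition without any proof, since it is exactly the definition-chase you describe, substituting $dB(u)=\xi(u)\,du$ from Definition 2.1 and then $\Xi(u)=\psi(u)\xi(u)$ from the earlier proposition (note the statement's ``$\Xi(t)=\beta(t)\xi(t)$'' is a typo for $\psi(t)\xi(t)$). Your added remark that the identification $\int\psi\,dB=\int\psi\xi\,du$ is formal white-noise calculus rather than a classical It\^{o} identity is a useful clarification the paper omits.
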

\subsection{Main theorem}
The main theorem and proof are now as follows:
\begin{thm}
The stochastic exponential ${Z}(t)$ is a true martingale iff
\begin{align}
\bm{\mathsf{E}}{Z}(t)\big\rbrace=\bm{\mathsf{E}}\exp\left(\int_{0}^{t}\psi(u)d{B}(u)-\frac{1}{2}\int_{0}^{t}|\psi(u)|^{2}du\right)=1
\end{align}
and requiring
\begin{align}
\exp(\|\psi\|_{L^{2}(\mathbb{R})}^{2})=\exp\left(\frac{1}{2}\int_{0}^{t}|\psi(u)|^{2}du\right)<\infty,~~\forall t>0
\end{align}
\end{thm}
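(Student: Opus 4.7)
The plan is to compute $\bm{\mathsf{E}}Z(t)$ directly from the cluster expansion developed in Section 2.1, reducing the entire problem to a single surviving term that cancels the It\^o compensator. First, using Proposition 2.6, I would write
\begin{align*}
\bm{\mathsf{E}}Z(t) \;=\; \exp\!\left(-\tfrac{1}{2}\!\int_{0}^{t}\!|\psi(u)|^{2}du\right)\,\bm{\mathsf{E}}\exp\!\left(\int_{0}^{t}\!\Xi(u)\,du\right),
\end{align*}
and identify the remaining stochastic factor as the moment-generating functional $\mathscr{M}[\Xi(t)]$ of the integrated Gaussian noise $\Xi$. Applying Lemma 2.4 then gives $\mathscr{M}[\Xi(t)] = \exp\bigl(\mathscr{C}[\Xi(t)]\bigr)$, so the whole computation reduces to evaluating the CGF of $\Xi$.

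Second, I would exploit the Gaussianity of $\Xi$ supplied by Proposition 2.5 together with the cumulant-vanishing identity (2.6): since $\Xi$ is centred and Gaussian, every cumulant with $m=1$ or $m\ge 3$ vanishes, and only the binary contribution survives in $\mathscr{C}[\Xi(t)]$. Lemma 2.6 evaluates this surviving term to precisely $\tfrac{1}{2}\|\psi\|_{L^{2}}^{2}$. Substituting back yields the desired cancellation
\begin{align*}
\bm{\mathsf{E}}Z(t) \;=\; \exp\!\left(-\tfrac{1}{2}\|\psi\|_{L^{2}}^{2}\right)\exp\!\left(\tfrac{1}{2}\|\psi\|_{L^{2}}^{2}\right) \;=\; 1,
\end{align*}
which is (2.23). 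The bound (2.24) then emerges naturally as the convergence criterion for the underlying cluster series: since the resummed MGF equals $\exp(\tfrac{1}{2}\|\psi\|_{L^{2}}^{2})$, the termwise manipulations are legitimate exactly when this quantity is finite for every $t>0$.

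To upgrade $\bm{\mathsf{E}}Z(t)=1$ to the full martingale property $\bm{\mathsf{E}}[Z(t)\mid\mathscr{F}_{s}] = Z(s)$, I would use that $Z(t)/Z(s) = \exp\bigl(\int_{s}^{t}\psi\,dB - \tfrac{1}{2}\int_{s}^{t}|\psi|^{2}du\bigr)$ is independent of $\mathscr{F}_{s}$ by the independence of Brownian increments, and then rerun the very same cluster-expansion computation on the interval $[s,t]$ to conclude that its unconditional expectation is $1$. Pulling out $Z(s)$, which is $\mathscr{F}_{s}$-measurable, closes the argument.

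The main obstacle I anticipate is combinatorial and analytic rather than conceptual: keeping the factor $1/m!$ in (2.13)--(2.14) consistent with the time-ordered simplex measure $\mathbf{D}_{m}[t_{1}\dots t_{m}]$ so that the delta-function contraction performed in Lemma 2.6 produces exactly $\tfrac{1}{2}\|\psi\|_{L^{2}}^{2}$ and not $\|\psi\|_{L^{2}}^{2}$ or $\tfrac{1}{4}\|\psi\|_{L^{2}}^{2}$, and rigorously justifying the interchange of $\bm{\mathsf{E}}$ with the infinite sum defining $\mathscr{M}[\Xi(t)]$. This is precisely where the integrability hypothesis (2.24) must do real work, rather than appear as a cosmetic side condition.
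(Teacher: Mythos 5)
Your proposal follows the paper's proof essentially step for step: factor out the compensator, identify the remaining factor as $\mathscr{M}[\Xi(t)]$, pass to the cumulant generating function via Lemma 2.4, use Gaussianity to annihilate the first-order and all $m\ge 3$ cumulants, and contract the delta function in the surviving binary term to produce the cancelling factor $\exp\bigl(\tfrac{1}{2}\int_{0}^{t}|\psi(u)|^{2}du\bigr)$. Your closing paragraph, which upgrades $\bm{\mathsf{E}}Z(t)=1$ to the conditional statement $\bm{\mathsf{E}}[Z(t)\mid\mathscr{F}_{s}]=Z(s)$ via independence of Brownian increments, is a step the paper itself omits (it stops at the unconditional expectation), so it is a genuine strengthening rather than a departure in method.
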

\begin{proof}
\begin{align}
&\bm{\mathsf{E}}{Z}(t)=\bm{\mathsf{E}}\exp\left(\int_{0}^{t}\psi(u)d{B}(u)-\frac{1}{2}\int_{0}^{t}|\psi(u)|^{2}du\right)\nonumber\\&
=\bm{\mathsf{E}}\exp\left(\int_{0}^{t}\psi(u)\xi(u)du-\frac{1}{2}\int_{0}^{t}|\psi(u)|^{2}du\right)\nonumber\\&
=\bm{\mathsf{E}}\exp\left(\int_{0}^{t}\Xi(u)du-\frac{1}{2}\int_{0}^{t}|\psi(u)|^{2}du\right)\nonumber\\&
=\exp\left(-\frac{1}{2}\int_{0}^{t}|\psi(u)|^{2}du\right)\bm{\mathsf{E}}\exp\left(\int_{0}^{t}\Xi(u)du\right)\nonumber\\&
=\exp\left(-\frac{1}{2}\int_{0}^{t}|\psi(u)|^{2}du\right)\bm{\mathsf{E}}\sum_{m=0}^{\infty}\frac{1}{m!}\int_{0}^{t}dt_{1}...\int_{0}^{t_{m-1}}dt_{m}\prod_{q=1}^{m}
\Xi(t_{q})\nonumber\\&
\equiv \exp\left(-\frac{1}{2}\int_{0}^{t}|\psi(u)|^{2}du\right)\bm{\mathsf{E}}\sum_{m=0}^{\infty}\frac{1}{m!}\int \mathbf{D}_{m}[t_{1}...t_{m}]\prod_{q=1}^{m}
\Xi(t_{q})\nonumber\\&
=\exp\left(-\frac{1}{2}\int_{0}^{t}|\psi(u)|^{2}du\right)\sum_{m=0}^{\infty}\frac{1}{m!}\int \mathbf{D}_{m}[t_{1}...t_{m}]\bm{\mathsf{E}}\prod_{q=1}^{m}
\Xi(t_{q})
\end{align}
However, from (2.11), the MGF is
\begin{align}
{\mathscr{M}}[\Xi(t)]=\sum_{m=0}^{\infty}\frac{1}{m!}\int \mathbf{D}_{m}[t_{1}...t_{m}]\bm{\mathsf{E}}\prod_{q=1}^{m}\Xi(t_{q})
\end{align}
so (2.26) becomes
\begin{align}
\bm{\mathsf{E}}{Z}(t)=\exp\left(-\frac{1}{2}\int_{0}^{t}|\psi(u)|^{2}du\right){\mathscr{M}}[\Xi(t)]
\end{align}
Now using Lemma (2.4)
\begin{align}
{\mathscr{M}}[\Xi(t)]=\exp({\mathscr{C}}[\Xi(t)])
\end{align}
giving
\begin{align}
&\bm{\mathsf{E}}{Z}(t)=\exp\left(-\frac{1}{2}\int_{0}^{t}|\psi(u)|^{2}du\right){\mathscr{M}}[\Xi(t)]\nonumber\\&
=\exp\left(-\frac{1}{2}\int_{0}^{t}|\psi(u)|^{2}du\right)\exp\left({\mathscr{C}}[\Xi(t)]\right)\nonumber\\&
=\exp\left(-\frac{1}{2}\int_{0}^{t}|\psi(u)|^{2}du\right)\exp\left(\sum_{m=1}^{\infty}\frac{1}{m!}\int \mathbf{D}_{m}[t_{1}...t_{m}]\bm{\mathsf{C}}\overrightarrow{\mathbb{T}}\prod_{q=1}^{m}\Xi(t_{q})\right)\nonumber\\&
=\exp\left(-\frac{1}{2}\int_{0}^{t}|\psi(u)|^{2}du\right)\exp\left(\sum_{m=1}^{\infty}\frac{1}{m!}\int \mathbf{D}_{m}[t_{1}...t_{m}]\bm{\mathsf{C}}\overrightarrow{\mathbb{T}}
\prod_{q=1}^{m}\psi(t_{q})\xi(t_{q})\right)
\end{align}
Now since $\xi(t)$ is a Gaussian process, all cumulants of order $m\ge 3$ vanish so that
\begin{align}
\bm{\mathsf{C}}\overrightarrow{\mathbb{T}}\left\lbrace\prod_{q=1}^{m}
\Xi(t_{q})\right\rbrace_{m\ge 3}=\bm{\mathsf{C}}\overrightarrow{\mathbb{T}}\left\lbrace\prod_{q=1}^{m}
\psi(t_{q})\xi(t_{q})\right\rbrace_{m\ge 3}=0
\end{align}
This leaves
\begin{align}
&\bm{\mathbf{E}}{Z}(t)=\exp\left(-\frac{1}{2}\int_{0}^{t}|\psi(u)|^{2}du\right)\nonumber\\&\times \exp\left(\int \mathbf{D}_{m}[t_{1}]\psi(t_{1})
\bm{\mathsf{C}}\xi(t_{1})\big\rbrace+\frac{1}{2}\int\mathbf{D}[t_{1},t_{2}]\psi(t_{1})\psi(t_{2})\bm{\mathsf{C}}
\big\lbrace\xi(t_{1})\xi(t_{2})\right)\nonumber\\&
\equiv\exp\left(-\frac{1}{2}\int_{0}^{t}|\psi(u)|^{2}du\right)\nonumber\\&\times \exp\left(\int_{0}^{t}dt_{1}\psi(t_{1})
\bm{\mathsf{C}}\xi(t_{1})+\frac{1}{2}\int_{0}^{t}\int_{0}^{t_{1}}dt_{1}dt_{2}\psi(t_{1})\psi(t_{2})\bm{\mathsf{C}}
\xi(t_{1})\xi(t_{2})\right)\nonumber\\&\equiv \exp\left(-\frac{1}{2}\int_{0}^{t}|\psi(u)|^{2}du\right)\nonumber\\&\times \exp\left(\int_{0}^{t}dt_{1}\psi(t_{1})\underbrace{\bm{\mathsf{E}}~\lbrace\xi(t_{1})\big\rbrace}_{=0}+\frac{1}{2}\int_{0}^{t}\int_{0}^{t_{1}}dt_{1}dt_{2}\psi(t_{1})\psi(t_{2})
\bm{\mathsf{E}}~\xi(t_{1})\xi(t_{2})\right)
\nonumber\\&\equiv \exp\left(-\frac{1}{2}\int_{0}^{t}|\psi(u)|^{2}du\right)\exp\left(\frac{1}{2}\int_{0}^{t}\int_{0}^{t_{1}}dt_{1}dt_{2}\psi(t_{1})\psi(t_{2})\bm{\mathsf{E}}~
\xi(t_{1})\xi(t_{2})\right)\nonumber\\&\equiv \exp\left(-\frac{1}{2}\int_{0}^{t}|\psi(u)|^{2}du\right)\exp\left(\frac{1}{2}\int_{0}^{t}\int_{0}^{t_{1}}dt_{1}dt_{2}{\psi}(t_{1})
{\psi}(t_{2}){\delta}(t_{2}-t_{1})\right)\nonumber\\&
=\exp\left(-\frac{1}{2}\int_{0}^{t}|\psi(u)|^{2}du\right)\exp\left(\frac{1}{2}\int_{0}^{t}dt_{1}{\psi}(t_{1})
\bigg|\int_{0}^{t_{1}}dt_{2}{\psi}(t_{2}){\delta}(t_{2}-t_{1})\bigg|\right)\nonumber\\&
=\exp\left(-\frac{1}{2}\int_{0}^{t}|\psi(u)|^{2}du\right)\exp\left(\frac{1}{2}\int_{0}^{t}dt_{1}\psi(t_{1})\psi(t_{1})\right)\nonumber\\&
=\exp\left(-\frac{1}{2}\int_{0}^{t}|\psi(u)|^{2}du\right)\exp\left(\frac{1}{2}\int_{0}^{t}|\psi(t_{1})|^{2}dt_{1}\right)\nonumber\\&
\equiv\exp\left(-\frac{1}{2}\int_{0}^{t}|\psi(u)|^{2}du\right)\exp\left(\frac{1}{2}\int_{0}^{t}|\psi(u)|^{2}du\right)\nonumber\\&
\equiv \exp\left(-\frac{1}{2}\int_{0}^{t}|\psi(u)|^{2}du+\frac{1}{2}\int_{0}^{t}|\psi(u)|^{2}du\right)=1
\end{align}
iff
\begin{align}
\exp\left(\dfrac{1}{2}\int_{0}^{t}|\psi(u)|^{2}du\right)<\infty
\end{align}
which are the Novikov criteria. If for some function $\psi(t),~\exists~ T>0$ such that $\frac{1}{2}\int_{0}^{T}|\psi(u)|^{2}du=\infty$ then $\exp\left(\tfrac{1}{2}\int_{0}^{T}|\psi(u)|^{2}du\right)=\infty$ and $\exp\left(-\tfrac{1}{2}\int_{0}^{T}|\psi(u)|^{2}du\right)=0$ so that $\bm{\mathsf{E}}Z(t)=0$. Hence ${Z}(t)$ is a martingale for these criteria and the proof is complete.
\end{proof}
As a  corollary, it follows easily that $\bm{\mathsf{M}}(t,p)=\bm{\mathsf{E}}[|{Z}(t)|^{p}]$ is a submartingale for all $p>1$ if
$\phi(t)=\int_{0}^{t}|\psi(u)|^{2}du$ is monotone increasing with t.
\begin{cor}
Given $\bm{\mathsf{E}}{Z}(t)=1$ it follows that
\begin{align}
\bm{\mathsf{M}}(t,p)=\bm{\mathsf{E}}[|{Z}(t)|^{p}]=\exp\left(\tfrac{1}{2}[p(p-1)\phi(t)\right)
\end{align}
is a submartingale for all $p>1$, if $\phi(t))=\int_{0}^{t}|\psi(u)|^{2}du $ is bounded but monotone increasing with t.
\end{cor}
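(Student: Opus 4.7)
The plan is to reduce the claim to another application of Theorem~2.2 by recognizing $Z(t)^{p}$ as a deterministic multiple of a Dolean-Dades stochastic exponential driven by the rescaled function $p\psi$. First I would note that $Z(t)>0$ (it is an exponential), so $|Z(t)|^{p}=Z(t)^{p}$, and then rewrite the exponent by adding and subtracting $\tfrac{1}{2}p^{2}\phi(t)$:
\begin{align*}
Z(t)^{p}=\exp\!\left(p\!\int_{0}^{t}\!\psi(u)dB(u)-\tfrac{p^{2}}{2}\phi(t)\right)\exp\!\left(\tfrac{1}{2}p(p-1)\phi(t)\right)\equiv \widetilde{Z}_{p}(t)\,\exp\!\left(\tfrac{1}{2}p(p-1)\phi(t)\right),
\end{align*}
where $\widetilde{Z}_{p}(t)$ is the DDSE driven by $p\psi$ in place of $\psi$. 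Since $p\psi\in L^{2}(\mathbb{R}^{+})$ whenever $\psi\in L^{2}(\mathbb{R}^{+})$, and boundedness of $\phi$ supplies $\exp(\tfrac{1}{2}p^{2}\phi(t))<\infty$, Theorem~2.2 applies verbatim to $\widetilde{Z}_{p}$ and yields $\bm{\mathsf{E}}\widetilde{Z}_{p}(t)=1$. Taking expectations of the factorization above then produces the claimed identity $\bm{\mathsf{M}}(t,p)=\exp(\tfrac{1}{2}p(p-1)\phi(t))$.

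For the submartingale property I would repeat the same factorization conditionally. Because the Brownian increments over $(s,t]$ are independent of $\mathscr{F}_{s}$, the shifted stochastic exponential $\widetilde{Z}_{p}(t)/\widetilde{Z}_{p}(s)$ has conditional expectation $1$ given $\mathscr{F}_{s}$ (again by the main theorem applied on the interval $[s,t]$), whence
\begin{align*}
\bm{\mathsf{E}}[Z(t)^{p}\mid\mathscr{F}_{s}]=Z(s)^{p}\exp\!\bigl(\tfrac{1}{2}p(p-1)[\phi(t)-\phi(s)]\bigr)\geq Z(s)^{p},
\end{align*}
where the inequality uses $p(p-1)>0$ (from $p>1$) together with monotonicity of $\phi$. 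This establishes $|Z|^{p}$ as a genuine $(\mathscr{F}_{t})$-submartingale, which in particular makes the deterministic $\bm{\mathsf{M}}(t,p)$ non-decreasing in $t$, matching the explicit formula just derived.

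The main obstacle, if any, is cosmetic: one must verify that the cluster-expansion machinery of Theorem~2.2 is insensitive to the rescaling $\psi\mapsto p\psi$, which it is because scalar multiplication preserves both the $L^{2}$ hypothesis and the Gaussian integrability bound $\exp(\tfrac{1}{2}\int_{0}^{t}|p\psi|^{2}du)<\infty$. An alternative route that bypasses even this is to invoke the conditional Jensen inequality: since $x\mapsto|x|^{p}$ is convex for $p\geq 1$ and $Z(t)$ is a martingale by Theorem~2.2, $|Z(t)|^{p}$ is automatically a submartingale, and the explicit closed form of $\bm{\mathsf{M}}(t,p)$ then follows from the first-paragraph computation.
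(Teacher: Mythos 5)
Your proposal is correct, and the computation of the closed form $\bm{\mathsf{M}}(t,p)=\exp(\tfrac{1}{2}p(p-1)\phi(t))$ is essentially the same algebra as the paper's: the paper also factors out the deterministic term $\exp(-\tfrac{p}{2}\phi(t))$ and evaluates $\bm{\mathsf{E}}\exp\bigl(p\int_{0}^{t}\psi(u)dB(u)\bigr)=\exp(\tfrac{1}{2}p^{2}\phi(t))$, which is exactly your observation that the cluster-expansion argument of the main theorem survives the rescaling $\psi\mapsto p\psi$. Where you genuinely diverge is in what "submartingale" is taken to mean and how it is verified. The paper only checks that the deterministic function $t\mapsto\bm{\mathsf{M}}(t,p)$ is monotone increasing when $\phi$ is, and declares that to be the submartingale property; it never conditions on the filtration. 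You instead prove the honest statement about the process: using independence of the increments of $B$ over $(s,t]$ from $\mathscr{F}_{s}$ and the conditional version of the same Gaussian computation, you obtain $\bm{\mathsf{E}}[Z(t)^{p}\mid\mathscr{F}_{s}]=Z(s)^{p}\exp\bigl(\tfrac{1}{2}p(p-1)[\phi(t)-\phi(s)]\bigr)\geq Z(s)^{p}$, which is strictly stronger than (and implies) the monotonicity the paper records. Your alternative route via conditional Jensen applied to the convex map $x\mapsto|x|^{p}$ and the martingale $Z$ is the standard textbook argument and is also valid, provided you note (as you implicitly do through the explicit formula and the boundedness of $\phi$) that $|Z(t)|^{p}$ is integrable. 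In short: same computation for the moment formula, but your treatment of the submartingale claim is more careful and proves more than the paper does.
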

\begin{proof}
\begin{align}
&|{Z}(t)|^{p}=\left(\exp\left|\int_{0}^{t}\psi(u)d{B}(u)-\frac{1}{2}\int_{0}^{t}|\psi(u)|^{2}du\right)\right|^{p}\nonumber\\&
=\exp\left(p\int_{0}^{t}\psi(u)d {B}(u)-\frac{p}{2}\int_{0}^{t}|\psi(u)|^{2}du\right)\nonumber\\&
=\exp\left(-\frac{p}{2}\int_{0}^{t}|\psi(u)|^{2}du\right)\exp\left(p\int_{0}^{t}\psi(u)d{B}(u)\right)
\end{align}
Then
\begin{align}
\bm{\mathsf{M}}(t,p)=&\bm{\mathsf{E}}[|{Z}(t)|^{p}]=\exp\left(-\frac{p}{2}\int_{0}^{t}|\psi(u)|^{2}du\right)
\bm{\mathsf{E}}\exp\left(p\int_{0}^{t}\psi(u)d{B}(u)\right)\nonumber\\&
=\exp\left(-\frac{1}{2}p\int_{0}^{t}|\psi(u)|^{2}du\right)\exp\left(\frac{1}{2}p^{2}\int_{0}^{t}|\psi(u)|^{2}du\right)\nonumber\\&
=\exp\left(\frac{1}{2}p(p-1)\int_{0}^{t}|\psi(u)|^{2}du\right)=\exp\left(\tfrac{1}{2}p(p-1)\phi(t)\right)
\end{align}
If $\phi(t)>\phi(t^{\prime})$ for all $t>t^{\prime}$ then $\bm{\mathsf{M}}(t,p)>\bm{\mathsf{M}}(t^{\prime},p)$ so that $\bm{\mathsf{M}}(t,p)$ is monotone increasing.
Hence, $\bm{\mathsf{M}}(t,p)$ is a submartingale on $\mathbb{R}^{+}$. If $p=1$ then $\bm{\mathsf{M}}(t,1)=\bm{\mathsf{E}}{Z}(t)=1$.
\end{proof}

\clearpage
}
\end{document}